\documentclass[12pt]{amsart}

\usepackage{amssymb,amsmath,amsthm}

\textwidth=15cm
\textheight=22cm
\topmargin=0.5cm
\oddsidemargin=0.5cm
\evensidemargin=0.5cm
\pagestyle{plain}

\def\RR{{\mathbb R}}
\def\ZZ{{\mathbb Z}}

\newtheorem{Theorem}{Theorem}[section]
\newtheorem{Lemma}[Theorem]{Lemma}
\newtheorem{Corollary}[Theorem]{Corollary}
\newtheorem{Proposition}[Theorem]{Proposition}
\theoremstyle{definition}
\newtheorem{Remark}[Theorem]{Remark}
\newtheorem{Example}[Theorem]{Example}

\begin{document}

\title{Centrally symmetric configurations of\\
order polytopes}

\author{Takayuki Hibi}
\address[Takayuki Hibi]{
Department of Pure and Applied Mathematics,
Graduate School of Information Science and Technology,
Osaka University,
Toyonaka, Osaka 560-0043, Japan}
\email{hibi@math.sci.osaka-u.ac.jp}

\author{Kazunori Matsuda}
\address[Kazunori Matsuda]{
Department of Pure and Applied Mathematics,
Graduate School of Information Science and Technology,
Osaka University,
Toyonaka, Osaka 560-0043, Japan}
\email{kaz-matsuda@math.sci.osaka-u.ac.jp}

\author{Hidefumi Ohsugi}
\address[Hidefumi Ohsugi]{Department of 
Mathematical Sciences,
School of Science and Technology,
Kwansei Gakuin University,
Sanda, Hyogo, 669-1337, Japan}
\email{ohsugi@kwansei.ac.jp}

\author{Kazuki Shibata}
\address[Kazuki Shibata]{
Department of Mathematics,
College of Science,
Rikkyo University,
Toshima-ku, Tokyo 171-8501, Japan.}
\email{k-shibata@rikkyo.ac.jp}

\begin{abstract}
It is shown that the toric ideal of the centrally symmetric
configuration of the order polytope of a finite partially ordered set
possesses a squarefree quadratic initial ideal.  It then follows that
the convex polytope arising from the centrally symmetric configuration
of an order polytope is a normal Gorenstein Fano polytope.
\end{abstract}

\maketitle

\section*{Introduction}
Gorenstein Fano polytopes (reflexive polytopes)
are interested in many researchers
since they correspond to Gorenstein Fano varieties
and are related with mirror symmetry.
See, e.g., \cite[\S 8.3]{Cox} and its References.
One of the most important problem 
is to find new classes of Gorenstein Fano polytopes.
The centrally symmetric configuration \cite{central} of an integer matrix 
supplies one of the most powerful tools to construct
normal Gorenstein Fano polytopes.
The purpose of the present paper is to study the centrally symmetric configuration 
of the integer matrix associated with the order polytope of 
a finite partially ordered set.

Let $\mathbb{Z}^{d \times n}$ denote the set of $d \times n$ integer matrices. 
Given $A \in \mathbb{Z}^{d \times n}$ for which no column vector is a zero vector,
the {\em centrally symmetric configuration}
of $A$ is the $(d+1) \times (2n + 1)$ integer matrix 
\[
A^{\pm} = 
\left[
\begin{array}{c|ccc|ccc}
0      &   &   &    & &   & \\
\vdots &   & A &    & &-A & \\
0      &   &   &    & &   & \\
\hline
1      & 1 & \cdots & 1 & 1 & \cdots & 1
\end{array}
\right].
\]
On the other hand, 
the {\em centrally symmetric polytope} 
arising from $A$ is the convex polytope ${\mathcal Q}^{\rm (sym)}_{A}$ 
which is the convex hull in $\RR^{d}$ of the column vectors of the matrix
\[
\left[
\begin{array}{c|ccc|ccc}
0      &   &   &    & &   & \\
\vdots &   & A &    & &-A & \\
0      &   &   &    & &   & 
% \hline
% 1      & 1 & \cdots & 1 & 1 & \cdots & 1
\end{array}
\right].
\] 

We focus our attention on the problem when ${\mathcal Q}^{\rm (sym)}_{A}$ is 
a normal Gorenstein Fano polytope.
In general, 
the origin is contained in the interior of
${\mathcal Q}^{\rm (sym)}_{A} \subset \RR^{d}$.
Suppose that 
%the convex polytope ${\rm conv}(A) \subset \RR^{d}$,
%which is the convex hull of the column vectors of $A$, is of dimension $d$.
$A \in \ZZ^{d \times n}$ satisfies $\ZZ A = \ZZ^d$.
(Here $\ZZ A = \{ z_1 {\bf a}_1 + \cdots + z_n {\bf a}_n
\ | \  z_1 , \ldots, z_n \in \ZZ \}$
for $A = [{\bf a}_1, \ldots, {\bf a}_n] \in \ZZ^{d \times n}$.)
% is a Fano polytope.
%Furthermore,
Then Lemma \ref{Gorenstein} in Section 1 guarantees that
if the toric ideal $I_{A^{\pm}}$ of $A^{\pm}$ possesses a squarefree initial
ideal with respect to a reverse lexicographic order for which the variable
corresponding to the column $[0, \ldots, 0, 1]^{t}$ is smallest,
then ${\mathcal Q}^{\rm (sym)}_{A}$ is a normal Gorenstein Fano polytope.

In \cite{central}, it is shown that if 
%$A \in \mathbb{Z}^{d \times n}$ is {\em unimodular}, i.e., 
${\rm rank}(A) = d$ and 
all nonzero maximal minors of $A$ are $\pm 1$,
%have the same absolute value, 
then ${\mathcal Q}^{\rm (sym)}_{A}$ is a normal Gorenstein Fano polytope. 
However, the converse does not hold. 
It is mentioned the existence of a matrix $A$ such that 
$A$ does not satisfy the above condition but ${\mathcal Q}^{\rm (sym)}_{A}$ is a 
normal Gorenstein Fano polytope (\cite[Example 2.16]{central}). 
Hence to construct an infinite family of matrices $A$ like this is an important problem. 
The aim of this paper is to give an answer of this problem by using partially ordered sets. 

Let $P$ be a partially ordered set (poset) 
on $[d] = \{ 1, \ldots, d \}$
and ${\bf e}_{1}, \ldots, {\bf e}_{d}$ the unit coordinate vectors 
of $\RR^{d}$.
Given a subset $\alpha \subset P$, we write $\rho(\alpha) \in \RR^{d}$ 
for the vector $\sum_{i \in \alpha}{\bf e}_{i}$. 
A {\em poset ideal} of $P$ is a subset $\alpha \subset P$ such that
if $a \in \alpha$ and $b \in P$ together with $b \leq a$, 
then $b \in \alpha$.  In particular, the empty set as well as $P$
itself is a poset ideal of $P$.  Let ${\mathcal J}(P)$ denote the set
of poset ideals of $P$.   
Note that $\mathcal{J}(P)$ has the structure of distributive lattice under inclusion, 
moreover, for any distributive lattice $D$, there exists a poset $P$ 
such that $D \cong \mathcal{J}(P)$ by Birkhoff's Theorem \cite{Birkhoff}. 
The {\em order polytope} ${\mathcal O}(P)$ is the 
$d$-dimensional polytope
which is the convex hull of
$\{ \rho(\alpha) \, | \, \alpha \in {\mathcal J}(P) \}$ in $\RR^{d}$.
See \cite{Stanley}.
We then write $A_{P}$ for the integer matrix whose column vectors are
those $\rho(\alpha)^{t}$ with $\alpha \in {\mathcal J}(P) \setminus
\{ \emptyset \}$.  
This integer matrix $A_P$ always satisfies $\ZZ A_P = \ZZ^d$, but 
does not always satisfy the condition that 
all nonzero maximal minors of $A_P$ are $\pm 1$ (Proposition 2.4). 
We prove that ${\mathcal Q}^{\rm (sym)}_{A_{P}}$ is a normal Gorenstein Fano polytope 
for {\em any} partially ordered set $P$. 
%, it follows that
%${\mathcal Q}^{\rm (sym)}_{A_{P}}$ is a Fano polytope.

%\begin{Theorem}
%\label{aaaaa}
%Let $P$ be an arbitrary finite partially ordered set.  
%Then
%the toric ideal $I_{A_{P}^{\pm}}$ of $A_{P}^{\pm}$ 
%possesses a squarefree quadratic initial ideal with respect to 
%a reverse lexicographic order for which the variable
%corresponding to the column $[0, \ldots, 0, 1]^{t}$ is smallest.
%\end{Theorem}
 
%\begin{Corollary}
%\label{bbbbb}
%The centrally symmetric polytope ${\mathcal Q}^{\rm %(sym)}_{A_{P}}$
%arising from an arbitrary finite partially ordered set $P$
%is a normal Gorenstein Fano polytope.
%\end{Corollary}

The present paper is organized as follows.  In Section $1$, we recall
basic materials on toric ideals of configurations as well as Fano polytopes.
In Section $2$, we will prove that,
for an arbitrary finite partially ordered set $P$,
%Then
the toric ideal $I_{A_{P}^{\pm}}$ of $A_{P}^{\pm}$ 
possesses a squarefree quadratic initial ideal with respect to 
a reverse lexicographic order for which the variable
corresponding to the column $[0, \ldots, 0, 1]^{t}$ is smallest
(Theorem \ref{Grobner}).
In particular, 
the centrally symmetric polytope ${\mathcal Q}^{\rm (sym)}_{A_{P}}$
arising from an arbitrary finite partially ordered set $P$
is a normal Gorenstein Fano polytope
(Corollary \ref{NGF}).
%Theorem \ref{aaaaa} together with Corollary \ref{bbbbb} 
%will be proved in Section $2$.  
%
Finally, in Section $3$, we compute
the $\delta$-vectors of the centrally symmetric polytope 
${\mathcal Q}^{\rm (sym)}_{A_{P}}$, where $P$ is an antichain.

\section{Centrally symmetric configurations and Fano polytopes}

We recall fundamental materials on centrally symmetric configurations and 
Fano polytopes.  Let, as before, $\mathbb{Z}^{d \times n}$ denote the set of
$d \times n$ integer matrices.

\medskip

\noindent
{\bf a) Configuration}

A matrix $A \in \mathbb{Z}^{d \times n}$ is called a {\em configuration} 
if there exists a hyperplane $\mathcal{H} \subset \mathbb{R}^d$ not passing the origin 
of $\mathbb{R}^d$ such that each column vector of $A$ lies on $\mathcal{H}$. 

Let $K$ be a field and $K[T^{\pm 1}] = K[t_1, t_1^{-1}, \ldots, t_d, t_d^{-1}]$ 
the Laurent polynomial ring in $d$ variables over $K$. 
We associate each vector ${\bf a} = [a_1, \ldots, a_d]^t \in \mathbb{Z}^d$, 
with the Laurent monomial
$T^{\bf{a}} = t_1^{a_1} \cdots t_d^{a_d} \in K[T^{\pm 1}]$.
Given a configuration $A \in \mathbb{Z}^{d \times n}$ with 
$\mathbf{a}_1, \ldots, \mathbf{a}_n$ its column vectors, 
the {\em toric ring} $K[A]$ of $A$ is the monomial subalgebra of $K[T^{\pm 1}]$ 
which is generated by $T^{{\bf a}_1}, \ldots, T^{{\bf a}_n}$. 
Let $K[X] = K[x_1, \ldots, x_n]$ denote the polynomial ring in $n$ variables 
over $K$.  We define the 
surjective ring homomorphism $\pi : K[X] \to K[A]$ 
by setting $\pi(x_i) = T^{{\bf a}_i}$ for $i = 1, \ldots, n$.
The kernel $I_A$ of $\pi$ is called
the {\em toric ideal} of $A$. 

\medskip

\noindent
{\bf b) Fano polytope}

A convex polytope ${\mathcal P} \subset \RR^{d}$ is called {\em integral}
if each vertex of ${\mathcal P}$ belongs to $\ZZ^{d}$.  We say that
an integral convex polytope ${\mathcal P} \subset \RR^{d}$ is {\em normal}
if, for each integer $N > 0$ and for each ${\bf a} \in N{\mathcal P} \cap \ZZ^{d}$,
there exist ${\bf a}_{1}, \ldots, {\bf a}_{N}$ belonging to 
${\mathcal P} \cap \ZZ^{d}$ such that 
${\bf a} = {\bf a}_{1} + \cdots + {\bf a}_{N}$. 
Now, an integral convex polytope ${\mathcal P} \subset \RR^d$ 
is said to be a {\em Fano polytope} if the dimension of ${\mathcal P}$ is $d$
and if the origin of $\RR^{d}$ is the unique integer point belonging to the interior 
of ${\mathcal P}$.
A Fano polytope ${\mathcal P} \subset \RR^d$
is called {\em Gorenstein} if 
its dual polytope 
\[
\mathcal{P}^{\vee} = 
\{ \, {\bf x} \in \mathbb{R}^d \, | \, 
\langle {\bf x}, {\bf y}  \rangle \le 1 \mathrm{\ for\ all\ } {\bf y} \in \mathcal{P} \}
\] 
is integral, 
where $\langle {\bf x}, {\bf y} \rangle$ is a canonical inner product of $\RR^{d}$. 

We now come to an essential lemma on normal Fano polytopes.
We refer the reader to \cite{Sturm}
and \cite[Chapters $1$ and $5$]{dojo} 
for basic information on initial ideals of toric ideals, regular and unimodular
triangulations of integral convex polytopes.

\begin{Lemma}
\label{Gorenstein}
Let ${\mathcal P} \subset \RR^d$ be an integral convex polytope such that the origin is contained in its interior.
Let ${\mathcal P} \cap \ZZ^d = \{{\bf a}_1,\ldots, {\bf a}_n\}$
and 
\[
A = 
\begin{bmatrix} 
%\\
{\bf a}_1 & \cdots  & {\bf a}_n
\\
%\hline
1 & \cdots & 1 
\end{bmatrix} \in \ZZ^{(d+1)\times n}.
\]
Suppose that 
$\ZZ A = \ZZ^{d+1}$ and that
there exists an ordering of the variables
$x_{i_{1}} < \cdots < x_{i_{n}}$
for which ${\bf a}_{i_1} = {\bf 0}$
%$x_{i_{1}}$ corresponds the columns $[0, \ldots, 0, 1]^{t}$
%of $A$,
%where $[0, \ldots, 0, 1]^{t}$ is the transpose of $[0, \ldots, 0, 1]$,
such that the initial ideal ${\rm in}_<(I_A)$ of the toric ideal $I_{A}$
with respect to the reverse lexicographic order $<$ on $K[X]$ induced by 
the ordering is squarefree.
Then ${\mathcal P}$ is a normal Gorenstein Fano polytope. 
\end{Lemma}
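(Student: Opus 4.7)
My plan is to derive all three properties (normal, Fano, Gorenstein) from a single geometric object --- the regular unimodular triangulation $\Delta$ of $\{\mathbf{a}_1,\dots,\mathbf{a}_n\}$ encoded by the squarefree initial ideal $\mathrm{in}_<(I_A)$ --- and then to read off the Gorenstein/Fano content from the crucial additional fact that, under the reverse lexicographic assumption, every maximal simplex of $\Delta$ has the origin as a vertex.

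\textbf{Triangulation and normality.} First I would invoke the standard Sturmfels correspondence: a squarefree initial ideal of $I_A$ is the Stanley--Reisner ideal of a regular unimodular triangulation $\Delta$ of the configuration $\{\mathbf{a}_1,\dots,\mathbf{a}_n\}$, where unimodularity is understood with respect to $\ZZ A = \ZZ^{d+1}$. Existence of a unimodular triangulation, together with $\ZZ A = \ZZ^{d+1}$, gives normality of $\mathcal{P}$ at once. Next, because $<$ is reverse lexicographic with $x_{i_1}$ smallest and $\mathbf{a}_{i_1} = \mathbf{0}$, I would use the standard fact (reverse lex with smallest variable produces a triangulation that pulls the corresponding point) to conclude that every maximal simplex of $\Delta$ contains $\mathbf{0}$ among its vertices.

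\textbf{Gorenstein and Fano.} Fix a facet $F$ of $\mathcal{P}$ with supporting hyperplane $\langle \mathbf{u}_F,\mathbf{x}\rangle = c_F$, where $\mathbf{u}_F \in \ZZ^d$ is primitive and $c_F > 0$. Some maximal simplex $\tau \in \Delta$ has vertex set $\{\mathbf{0}, \mathbf{v}_1, \dots, \mathbf{v}_d\}$ with $\mathbf{v}_1, \dots, \mathbf{v}_d \in F$. Unimodularity of $\tau$ (after lifting by $1$) forces $|\det(\mathbf{v}_1,\dots,\mathbf{v}_d)| = 1$, so $(\mathbf{v}_1,\dots,\mathbf{v}_d)$ is a $\ZZ$-basis of $\ZZ^d$; expanding each standard basis vector in this basis, the identity $\langle \mathbf{u}_F, \mathbf{v}_j \rangle = c_F$ together with primitivity of $\mathbf{u}_F$ forces $c_F = 1$. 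Hence every facet of $\mathcal{P}$ is supported on a hyperplane $\langle \mathbf{u}_F, \mathbf{x}\rangle = 1$ with $\mathbf{u}_F \in \ZZ^d$, so $\mathcal{P}^\vee$ has vertex set $\{\mathbf{u}_F\} \subset \ZZ^d$ and is integral. For the Fano property, $\dim\mathcal{P} = d$ since the origin is interior; if $\mathbf{p} \in \ZZ^d$ were another interior lattice point, the ray from $\mathbf{0}$ through $\mathbf{p}$ would exit $\mathcal{P}$ at $t\mathbf{p}$ with $t > 1$ on some facet $F$, giving $\langle \mathbf{u}_F,\mathbf{p}\rangle = 1/t \in (0,1)$, contradicting the integrality of $\langle \mathbf{u}_F,\mathbf{p}\rangle$.

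\textbf{Main obstacle.} The single non-routine ingredient is the claim that every maximal simplex of $\Delta$ contains $\mathbf{0}$: this is precisely where the reverse lexicographic hypothesis with the origin variable smallest is used, and it is the geometric engine driving the Gorenstein conclusion. Once it is in hand, the rest of the argument reduces to linear algebra on a single unimodular simplex attached to each facet.
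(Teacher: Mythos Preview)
Your proposal is correct and follows essentially the same route as the paper: both invoke the Sturmfels correspondence to obtain a unimodular triangulation $\Delta$ of $\mathcal{P}$ in which every maximal simplex contains the origin (the paper cites \cite[Prop.~8.6, Cor.~8.9]{Sturm}), deduce normality from unimodularity, and then argue that each facet hyperplane has the form $\langle \mathbf{u},\mathbf{x}\rangle = 1$ with $\mathbf{u}\in\ZZ^d$, which gives both the Fano and Gorenstein properties. The paper compresses this last step to ``it then follows easily''; you have simply written out the linear-algebra details that make it work.
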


\begin{proof}
The existence of an initial ideal as stated above guarantees that
${\mathcal P}$ possesses a unimodular triangulation $\Delta$ 
(i.e., a triangulation $\Delta$ of ${\mathcal P}$ 
such that the normalized volume of 
each maximal simplex in $\Delta$ is one) 
for which each maximal face of $\Delta$ contains the origin of $\RR^{d}$ as a vertex.
See \cite[Proposition 8.6 and Corollary 8.9]{Sturm}.
It then follows easily that
${\mathcal P}$ is Fano and that
% the equation of 
the supporting hyperplane of
each facet of ${\mathcal P}$ is of the form
\[
\{
[z_1,\ldots,z_d]^t \in \RR^d
\ | \ 
a_{1} z_{1} + \cdots + a_{d} z_{d} = 1
\}
\]
with each $a_{i} \in \ZZ$.  Hence the dual polytope of ${\mathcal P}$ 
is integral.  
Furthermore, in general, the existence of a unimodular triangulation
of ${\mathcal P}$ says that ${\mathcal P}$ is normal
(see \cite[Proposition 13.15]{Sturm}).
\end{proof}

\medskip

\noindent
{\bf c) Centrally symmetric configuration}

In \cite{central}, 
the {\em centrally symmetric configuration} $A^\pm$ of a matrix $A
\in \mathbb{Z}^{d \times n}$
is introduced:
\[
A^{\pm} = 
\left[
\begin{array}{c|ccc|ccc}
0      &   &   &    & &   & \\
\vdots &   & A &    & &-A & \\
0      &   &   &    & &   & \\
\hline
1      & 1 & \cdots & 1 & 1 & \cdots & 1
\end{array}
\right].
\]
A basic fact obtained in \cite{central} is:

\begin{Proposition}[{\cite{central}}]
Let $A \in \mathbb{Z}^{d \times n}$ be a matrix
such that $\ZZ A = \ZZ^d$
and whose nonzero maximal minors are $\pm 1$. 
Then
the toric ideal $I_{A^{\pm}}$ of $A^{\pm}$ 
possesses a squarefree quadratic initial ideal with respect to 
a reverse lexicographic order for which the variable
corresponding to the column $[0, \ldots, 0, 1]^{t}$ is smallest.
In particular, ${\mathcal Q}^{\rm (sym)}_{A}$ is a 
normal Gorenstein Fano polytope.

\end{Proposition}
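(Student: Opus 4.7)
The ``in particular'' assertion is immediate from Lemma \ref{Gorenstein} applied to ${\mathcal P} = {\mathcal Q}^{\rm (sym)}_A$: the columns of the non-bottom-row part of the defining matrix of $A^{\pm}$ are (under the hypothesis) precisely the lattice points of ${\mathcal Q}^{\rm (sym)}_A$, the origin sits among them via the column $[0,\ldots,0,1]^t$, and $\ZZ A = \ZZ^d$ together with that zero-column forces $\ZZ A^{\pm} = \ZZ^{d+1}$. So the real content is the Gr\"obner basis statement.

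The plan is to label the variables of $K[X]$ as $x_0, x_1, \ldots, x_n, y_1, \ldots, y_n$, with $x_0$ corresponding to $[0,\ldots,0,1]^t$, each $x_i$ to $[\mathbf{a}_i, 1]^t$, and each $y_i$ to $[-\mathbf{a}_i, 1]^t$, and to fix a reverse lexicographic order $<$ with $x_0$ smallest. I would first single out two families of quadratic binomials in $I_{A^{\pm}}$: the \emph{balancing} binomials $x_i y_i - x_0^2$, arising from $[\mathbf{a}_i,1]+[-\mathbf{a}_i,1] = 2\,[0,1]$; and \emph{circuit quadratics} of the types $x_i x_j - x_k x_\ell$, $y_i y_j - y_k y_\ell$, and $x_i y_j - x_k y_\ell$, arising whenever the corresponding sum or difference relation holds among the $\mathbf{a}$'s. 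Choosing leading terms to avoid $x_0$ wherever possible (so ${\rm in}_<(x_i y_i - x_0^2) = x_i y_i$, and each circuit quadratic has both monomials $x_0$-free), these quadrics form a candidate Gr\"obner basis $\mathcal{G}$ of $I_{A^{\pm}}$.

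The proof then reduces to Buchberger's criterion for $\mathcal{G}$. Two ingredients are decisive. First, the assumption that every nonzero maximal minor of $A$ is $\pm 1$ implies that $A$, and hence $A^{\pm}$, is unimodular, so every circuit of $A^{\pm}$ has support with coefficients in $\{0,\pm 1\}$; consequently every reduced Gr\"obner basis of $I_{A^{\pm}}$ consists of squarefree binomials. Second, the balancing binomials let one substitute $x_0^2 \leftrightarrow x_i y_i$ freely, which is used to clear $x_0$ from leading monomials during $S$-pair reductions. Combining these with the sign structure of $A^{\pm}$, any binomial $u - v \in I_{A^{\pm}}$ of degree at least three should admit a factorization of $u$ compatible with the leading term of some generator of $\mathcal{G}$: the reduction either stays entirely among the variables $x_i, y_j$ (handled by unimodularity) or collapses via the balancing substitution. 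It follows that ${\rm in}_<(I_{A^{\pm}})$ is generated by the monomials $x_i y_i$ together with squarefree quadratic monomials in the $x_i, y_j$, and $x_0$ appears in no generator.

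The hard part will be the $S$-pair verification itself, in particular for pairs involving the mixed quadratics $x_i y_j - x_k y_\ell$: one must chase the reductions through several cases dictated by which of $\pm \mathbf{a}_i, \pm \mathbf{a}_j, \pm \mathbf{a}_k, \pm \mathbf{a}_\ell$ appear, and use unimodularity at each step to guarantee that rewriting the intermediate exponent vectors as $\{0,\pm 1\}$-combinations can always be carried out within degree two. This is where the hypothesis on the maximal minors is essential, not cosmetic. Once the Gr\"obner computation is done, Lemma \ref{Gorenstein} finishes the proof.
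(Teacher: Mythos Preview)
The paper does not supply a proof of this proposition; it is quoted from \cite{central}. So there is no in-paper argument to compare against, and your proposal has to be judged on its own. It contains a genuine error at the key step.

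You assert that ``$A$, and hence $A^{\pm}$, is unimodular.'' This is false. Take $A=[1]\in\ZZ^{1\times 1}$: every nonzero maximal minor of $A$ is $1$, yet
\[
A^{\pm}=\begin{bmatrix}0&1&-1\\1&1&1\end{bmatrix},
\qquad
\det\begin{bmatrix}1&-1\\1&1\end{bmatrix}=2,
\]
so $A^{\pm}$ has a maximal minor equal to $2$. The implication ``$A^{\pm}$ unimodular $\Rightarrow$ every reduced Gr\"obner basis of $I_{A^{\pm}}$ is squarefree'' therefore cannot be invoked. That is the load-bearing step of your plan, and it collapses.

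There is a second gap even if squarefreeness could be salvaged some other way. Unimodularity of a configuration guarantees that circuits have $\{0,\pm1\}$ coefficients, hence give \emph{squarefree} binomials, but says nothing about their \emph{degree}. Your ``circuit quadratics'' $x_ix_j-x_kx_\ell$, $y_iy_j-y_ky_\ell$, $x_iy_j-x_ky_\ell$ presuppose that the relevant relations involve at most four columns, which you never justify; circuits of a unimodular matrix can be arbitrarily long. The $S$-pair sketch you give has no visible mechanism for cutting higher-degree binomials down to degree two, so the quadratic conclusion is unsupported even granting squarefreeness.

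A minor point on the ``in particular'' paragraph: the claim that the columns of $A^{\pm}$ (top $d$ rows) are \emph{precisely} the lattice points of ${\mathcal Q}^{\rm (sym)}_A$ is unjustified and also unnecessary. What Lemma~\ref{Gorenstein} really uses is the unimodular triangulation with the origin in every maximal face, which the squarefree revlex initial ideal with $x_0$ smallest already yields; one does not need the configuration to exhaust the lattice points of the polytope. Your observation that $\ZZ A^{\pm}=\ZZ^{d+1}$ is correct and is what matters there.
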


\begin{Remark}
In general, $A \in \ZZ^{d \times n}$ is called 
{\em unimodular} if ${\rm rank} (A)=d$ and all nonzero maximal minors of $A$ have the same absolute value.
If we assume $\ZZ A = \ZZ^d$, then 
$A$ is unimodular if and only if 
all nonzero maximal minors of $A$ are $\pm 1$. 
\end{Remark}

\begin{Example}
Let $A$ be the following configuration:
$$
A = 
\left[
\begin{array}{c|ccc}
0      &   &   &    \\
\vdots &   & A' &     \\
0      &   &   &    \\
\hline
1      & 1 & \cdots & 1
\end{array}
\right]
\mbox{ where }
A^{'}=
\begin{bmatrix}
1 &0 &0 &0 &0 &1 &1 &0 &0 &0  \\
1 &1 &0 &0 &0 &0 &0 &1 &0 &1  \\
0 &1 &1 &0 &0 &0 &1 &0 &1 &0  \\
0 &0 &1 &1 &0 &0 &0 &1 &0 &0  \\
0 &0 &0 &1 &1 &0 &0 &0 &1 &0  \\
%0 &0 &0 &0 &1 &1 &0 &0 &0 &1
1 &1 &1 &1 &1 &1 &1 &1 &1 &1
\end{bmatrix}.
$$
%Here the matrix $A'$ is the vertex-edge incidence matrix of 
%the graph on the vertex set $\{ 1,\ldots,6 \}$ with the edge set $$\{ \{ 1,2 \}, \{ 2,3 \}, \{ 3,4 \},\{ 4,5 \},\{ 5,6 \},\{ 1,6 \},\{ 1,3 \},\{ 2 , 4 \},\{ 3,5 \},\{ 2,6 \} \}.$$
Then we have $\ZZ A =\ZZ^7$ but $A$ is not unimodular.
The initial ideal ${\rm in}_<(I_A)$ of $I_A$ 
with respect to any reverse lexicographic order $<$ is squarefree.
Moreover, $I_A$ has a quadratic Gr\"obner basis with respect to 
some reverse lexicographic order such that the smallest variable is $x_1$.
In addition, $K[A]$ is Gorenstein.
On the other hand,
${\mathcal Q}^{\rm (sym)}_{A}$ is not normal.
% because the matrix $A$ is not unimodular 
%and \cite[Theorem 3.3]{central}.
We can check that $I_{A^{\pm}}$ is not generated by quadratic binomials and ${\mathcal Q}^{\rm (sym)}_{A}$ is not Gorenstein
 by using the software package \texttt{CoCoA}
\cite{CoCoA}.
\end{Example}

\section{Centrally symmetric configurations of order polytopes}

In this section, we prove
that, for any poset $P$,
\begin{itemize}
\item
the toric ideal $I_{A_{P}^{\pm}}$ possesses
a squarefree quadratic initial ideal
with respect to a reverse lexicographic order 
such that the smallest variable corresponds to the origin;
(Theorem \ref{Grobner});
\item
${\mathcal Q}^{\rm (sym)}_{A_P} $ is a normal Gorenstein Fano polytope (Corollary \ref{NGF}). 
\end{itemize}

%\input{ex2}

%Next, we give a quadratic Gr\"{o}bner bases of $I_{A_{P}^{\pm}}$. 
Let $P$ be a poset on $[d] = \{1, \ldots, d\}$ and let $K[s, T^{\pm 1}] = K[s, t_1, t_1^{-1}, \ldots, t_d, t_d^{-1}]$ be a Laurent polynomial ring in $d+1$ variables over $K$.
%
% and 
We set $S_P$ to be the polynomial ring 
in variables
$z$, $x_I$ ($\emptyset \neq I \in \mathcal{J}(P)$), 
and $y_I$  ($\emptyset \neq I \in \mathcal{J}(P)$)
over $K$. 
We define the ring homomorphism 
$\pi : S_P \to K[s, T^{\pm 1}]$ 
by setting $\pi(z) = s$, $\pi(x_I) = s \prod_{i \in I} t_i$ and  $\pi(y_I) = s \prod_{i \in I} t_i^{-1}$. 
Then the toric ideal $I_{A_{P}^{\pm}}$ is the kernel of $\pi$
and the toric ring $K[ A_{P}^{\pm} ]$ is the image of $\pi$. 
Let $<$ be a reverse lexicographic order on $S_P$ which satisfies 
$x_I < x_J$ and $y_I < y_J$ for all $I, J \in \mathcal{J}(P)$
with $I \subset J$. 
Here we set $x_\emptyset = y_\emptyset = z$
and hence $z$ is the smallest variable in $S_P$.

\begin{Example}\label{example}
Let $P$ be the poset on $\{1, 2, 3, 4, 5\}$ with the partial order $1 < 3, 2 < 3, 2 < 4$ and $4 < 5$. 
In this case, $A_P$ is the following matrix

$$
\begin{bmatrix}
1 &0 &1 &0 &1 &1 &0 &1 &1 &1  \\
0 &1 &1 &1 &1 &1 &1 &1 &1 &1  \\
0 &0 &0 &0 &1 &0 &0 &1 &0 &1  \\
0 &0 &0 &1 &0 &1 &1 &1 &1 &1  \\
0 &0 &0 &0 &0 &0 &1 &0 &1 &1  
\end{bmatrix}.
$$
Then the initial ideal of $I_{A_P^{\pm}}$ with respect to a reverse lexicographic order $<$ is generated by 66 squarefree quadratic monomials. 
\end{Example}

\begin{Theorem}\label{Grobner}
Let $\mathcal{G}$ be the set of binomials of the following types: 
\begin{eqnarray*}
& & x_I x_J - x_{I \cup J} x_{I \cap J}, \ \ 
y_I y_J - y_{I \cup J} y_{I \cap J} \hspace{5mm} 
(I \not\sim J) \\
& & 
x_I y_J - x_{I \setminus k} y_{J \setminus k} \hspace{5mm} 
(k \mbox{ is a maximal element of both } I \mbox{ and } J).
\end{eqnarray*} 
Then $\mathcal{G}$ is a Gr\"{o}bner bases of $I_{A_{P}^{\pm}}$ with respect to a reverse lexicographic order $<$,
and the initial monomial of each binomial in ${\mathcal G}$ is the first monomial. 
\end{Theorem}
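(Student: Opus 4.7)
The plan is: (a) verify each binomial in $\mathcal{G}$ lies in $I_{A_{P}^{\pm}}$ and that the first-listed monomial is its $<$-initial term; (b) describe the monomials of $S_P$ not divisible by any of these initial monomials (the \emph{standard monomials}) as those of the form
\[
m \;=\; z^{a}\,\cdot\, x_{I_{1}}\cdots x_{I_{p}}\,\cdot\, y_{J_{1}}\cdots y_{J_{q}},
\]
where $I_{1}\subseteq\cdots\subseteq I_{p}$ and $J_{1}\subseteq\cdots\subseteq J_{q}$ are multichains of nonempty poset ideals satisfying $\max(I_{i})\cap \max(J_{j})=\emptyset$ for every $i,j$; and (c) prove that distinct standard monomials have distinct images under $\pi$. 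Once (a)--(c) are in hand, the standard Gr\"{o}bner-basis criterion for toric ideals applies: (a) and the well-order property of $<$ let every monomial reduce modulo $\mathcal{G}$ to a standard monomial, and (c) promotes the standard monomials to a $K$-basis of $K[A_{P}^{\pm}]$, which forces $(\mathcal{G})=I_{A_{P}^{\pm}}$ and pins down $\mathrm{in}_{<}(I_{A_{P}^{\pm}})$ as the ideal generated by the listed initial monomials.

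Step (a) is a direct computation: the identities $\prod_{i\in I}t_{i}\cdot\prod_{j\in J}t_{j} = \prod_{i\in I\cup J}t_{i}\cdot\prod_{i\in I\cap J}t_{i}$ and $t_{k}t_{k}^{-1}=1$ give the membership (using that $I\setminus\{k\}$ and $J\setminus\{k\}$ remain poset ideals when $k$ is maximal in both), while each $<$-initial term is identified by noting that the smallest variable in each relation --- namely $x_{I\cap J}$, $y_{I\cap J}$, or the pair $x_{I\setminus k},\,y_{J\setminus k}$ --- occurs only in the non-leading monomial. Step (b) is a literal reread of the three types of leading monomials.

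The heart of the argument is (c). To each standard monomial associate the pair of functions $c^{+}, c^{-}\colon P\to\ZZ_{\geq 0}$ defined by $c^{+}(k)=\#\{i:k\in I_{i}\}$ and $c^{-}(k)=\#\{j:k\in J_{j}\}$; both are order-reversing on $P$ (since the $I_{i}$ and $J_{j}$ are down-sets), and the multichains are recovered from them via $p=\max c^{+}$, $I_{i}=\{k:c^{+}(k)\geq p-i+1\}$, and analogously for the $J_{j}$. Setting $\mu^{\pm}(k)=\max\bigl(\{c^{\pm}(\ell):\ell>k\text{ in }P\}\cup\{0\}\bigr)$, one checks that $k$ is a maximal element of some $I_{i}$ (resp.\ of some $J_{j}$) precisely when $c^{+}(k)>\mu^{+}(k)$ (resp.\ $c^{-}(k)>\mu^{-}(k)$); the standard-form condition on $m$ therefore reads: for every $k\in P$, $c^{+}(k)\leq \mu^{+}(k)$ or $c^{-}(k)\leq \mu^{-}(k)$. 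Meanwhile $\pi(m)=s^{a+p+q}\prod_{k}t_{k}^{c^{+}(k)-c^{-}(k)}$ records only the difference $n:=c^{+}-c^{-}$ and the total $s$-degree $a+p+q$.

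Suppose now $\pi(m)=\pi(m')$ for two standard monomials with associated data $(a,c^{+},c^{-})$ and $(a',c^{\prime +},c^{\prime -})$, and set $f=c^{+}-c^{\prime +}$; from $c^{+}-c^{-}=c^{\prime +}-c^{\prime -}$ one also has $f=c^{-}-c^{\prime -}$. If $f\not\equiv 0$, after possibly swapping $m$ and $m'$ we pick $k^{*}$ with $f(k^{*})>0$ such that no $\ell>k^{*}$ in $P$ satisfies $f(\ell)>0$. For every such $\ell$, $f(\ell)\leq 0$ combined with the order-reversing property of $c^{\prime +}$ yields
\[
c^{+}(\ell)\;\leq\; c^{\prime +}(\ell)\;\leq\; c^{\prime +}(k^{*})\;<\; c^{+}(k^{*}),
\]
and analogously $c^{-}(\ell)<c^{-}(k^{*})$, forcing $\mu^{+}(k^{*})<c^{+}(k^{*})$ and $\mu^{-}(k^{*})<c^{-}(k^{*})$ and contradicting the standard-form condition on $m$. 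Hence $f\equiv 0$, so $c^{+}=c^{\prime +}$ and $c^{-}=c^{\prime -}$; comparing $s$-degrees then gives $a=a'$, whence $m=m'$. The expected main obstacle is (c), but this single maximal-element-in-$P$ argument --- which exploits that $c^{+}$ and $c^{-}$ are \emph{simultaneously} order-reversing --- dispatches it cleanly.
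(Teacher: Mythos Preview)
Your proof is correct, and at the strategic level it coincides with the paper's: both reduce the Gr\"obner-basis claim to showing that two distinct monomials not divisible by any leading term of $\mathcal{G}$ cannot have the same image under $\pi$ (the paper phrases this as a contradiction argument, assuming a nonzero irreducible binomial $u-v\in I_{A_P^\pm}$ with $u,v\notin\mathrm{in}(\mathcal{G})$).

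Where you genuinely diverge is in the execution of step~(c). The paper works directly with the maximal ideals $I_{\max},I'_{\max},J_{\max},J'_{\max}$ of the four chains, first establishes $I_{\max}\cup J'_{\max}=I'_{\max}\cup J_{\max}$, then handles separately the degenerate cases $I'_{\max}=\emptyset$ and $J_{\max}=\emptyset$, and finally picks a maximal element $a_{\max}\in I_{\max}\setminus I'_{\max}$, argues it lies in $J_{\max}$ but is not maximal there, and uses the counting identity~(1) to locate a smaller $J_1\in\mathcal{J}$ in which $a_{\max}$ \emph{is} maximal. Your encoding via the order-reversing multiplicity functions $c^{\pm}$ and the single choice of $k^{*}$ maximal in $\{k:f(k)>0\}$ collapses all of this: maximality of $k^{*}$ in that set forces $c^{+}(k^{*})>\mu^{+}(k^{*})$ and $c^{-}(k^{*})>\mu^{-}(k^{*})$ simultaneously, with no case split. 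What this buys is brevity and a uniform argument; what the paper's approach buys is concreteness (one sees the offending pair $x_{I_{\max}}y_{J_1}$ explicitly) at the cost of more bookkeeping.
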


\begin{proof}
Let $\mathrm{in}(\mathcal{G}) = \langle \mathrm{in}_<(g) \mid g \in \mathcal{G} \rangle$. 
Assume that $\mathcal{G}$ is not a Gr\"{o}bner bases of $I_{A_{P}^{\pm}}$. 
Then there exists a non-zero irreducible homogeneous binomial $f = u - v \in I_{A_{P}^{\pm}}$ such that 
neither $u$ nor $v$ belongs to $\mathrm{in}(\mathcal{G})$. 
%%%Let 
It then follows that there exist 
$\mathcal{I}$, $\mathcal{J}$, $\mathcal{I}'$, and $\mathcal{J}'$
satisfying:
\[
u = z^{\alpha} \prod_{I \in \mathcal{I}}x_{I}^{p_I} \prod_{J \in \mathcal{J}}y_{J}^{q_J}, \ \ 
v = z^{\alpha '} \prod_{I' \in \mathcal{I'}}x_{I'}^{p'_{I'}} \prod_{J' \in \mathcal{J'}}y_{J'}^{q'_{J'}}, 
\]
where $0< p_I, q_J, p'_{I'}, q'_{J'} \in \mathbb{Z}$ for all 
$I \in \mathcal{I}, J \in \mathcal{J}$, 
$I' \in \mathcal{I}'$, and $J' \in \mathcal{J}'$. 

Since $x_I y_I \nmid u, v$ for all $\emptyset \neq I \in \mathcal{J}(P)$, we have $\mathcal{I} \cap \mathcal{J} = \mathcal{I'} \cap \mathcal{J'} = \emptyset$. 
Moreover, since $x_I x_J, y_I y_J \nmid u, v$ for all $I \not\sim J$, we can see that $\mathcal{I, J, I'}$, and $\mathcal{J'}$ are totally ordered sets. 
In addition, we have $\mathcal{I} \cap \mathcal{I'} = \mathcal{J} \cap \mathcal{J'} = \emptyset$ since $f$ is irreducible. 

Let $p = \sum_{I \in \mathcal{I}} p_I$, $q = \sum_{J \in \mathcal{J}} q_J$, $p' = \sum_{I' \in \mathcal{I'}} p'_{I'}$ and 
$q' = \sum_{J' \in \mathcal{J'}} q'_{J'}$. 
Then 
\[
\pi(u) = s^{\alpha + p + q} \prod_{I \in \mathcal{I}} \left( \prod_{i \in I} t_i \right)^{p_I} \prod_{J \in \mathcal{J}} \left( \prod_{j \in J} t^{-1}_j \right)^{q_{J}}, 
\]
\[
\pi(v) = s^{\alpha' + p' + q'} \prod_{I' \in \mathcal{I'}} \left( \prod_{i \in I'} t_i \right)^{p'_{I'}} \prod_{J' \in \mathcal{J'}} \left( \prod_{j \in J'} t^{-1}_j \right)^{q'_{J'}}. 
\]
Since $\pi(u) = \pi(v)$, 
\begin{equation}
\sum_{\substack{I \in \mathcal{I} \\ a \in I}} p_{I} - \sum_{\substack{J \in \mathcal{J} \\ a \in J}} q_{J} = \sum_{\substack{I' \in \mathcal{I}' \\ a \in I'}} p'_{I'} -  \sum_{\substack{J' \in \mathcal{J}' \\ a \in J'}} q'_{J'} 
\end{equation}
holds for all $a \in P$. 
In other words,
\begin{equation}
\sum_{\substack{I \in \mathcal{I} \\ a \in I}} p_{I} 
+
\sum_{\substack{J' \in \mathcal{J}' \\ a \in J'}} q'_{J'} 
= \sum_{\substack{I' \in \mathcal{I}' \\ a \in I'}} p'_{I'} 
+
\sum_{\substack{J \in \mathcal{J} \\ a \in J}} q_{J} 
\end{equation}
holds for all $a \in P$.

Let $I_{\max}$, $I'_{\max}$, $J_{\max}$ and $J'_{\max}$ be the maximal elements of $\mathcal{I}$, $\mathcal{I}'$, 
$\mathcal{J}$ and $\mathcal{J}'$, respectively.
(If $\mathcal{J} =\emptyset$, then let $J_{\max}= \emptyset$.)
Since $x_{I_{\max}} y_{J_{\max}}$ does not belong to ${\rm in}({\mathcal G})$, $I_{\max}$ and $J_{\max}$ do not have a common maximal element.
Note that the left (resp.~right) side of equation (2)
is not zero if and only if
$a \in P$ belongs to $I_{\max} \cup J'_{\max}$
(resp.~$I'_{\max} \cup J_{\max}$).
Hence $I_{\max} \cup J'_{\max} = I'_{\max} \cup J_{\max}$ holds.  Suppose that $I'_{\max} = \emptyset$.
Then $I_{\max} \cup J'_{\max} = J_{\max}$. 
If $J_{\max} = \emptyset$, then we have 
$I_{\max} =I'_{\max} = J_{\max} = J'_{\max} = \emptyset$.
Therefore $f = 0$, that is, $u = v$.  
This is a contradiction. 
If $J_{\max} \neq \emptyset$, then
let $\{b_1, \ldots, b_s\}$ be the set of maximal elements of $J_{\max}$.  
Then $J_{\max} = \bigcup_{i = 1}^{s} \langle b_i \rangle$, where $\langle b_i \rangle = \{ p \in P \mid p \le b_i \}$. 
Since $I_{\max}$ and $J_{\max}$ do not share a common maximal element, $b_i \not\in I_{\max}$ 
for all $1 \le i \le s$. 
Then $b_i \in J'_{\max}$ and we have $J_{\max} \subset J'_{\max}$. 
Hence $J'_{\max} \subset I_{\max} \cup J'_{\max} = J_{\max} \subset J'_{\max}$. 
Therefore we have $J_{\max} = J'_{\max}$, but this contradicts that $\mathcal{J} \cap \mathcal{J}' = \emptyset$. 
Hence $I'_{\max} \neq \emptyset$. 
Similarly, we have $I_{\max} \neq \emptyset$. 

Since $\mathcal{I} \cap \mathcal{I}' = \emptyset$,
%and $\mathcal{I} ,\mathcal{I}' \ne \emptyset$, 
%
we have $I_{\max} \neq I'_{\max}$. 
Hence we may assume that $I_{\max} \not\subset I'_{\max}$. 
Then there exists $a_{\max} \in I_{\max}$ such that $a_{\max}$ is a maximal element of $I_{\max}$ and 
$a_{\max} \not\in I'_{\max}$. 
Then $a_{\max} \in J_{\max}$. 
Since $a_{\max}$ is not a maximal element of $J_{\max}$, we have 
$\{ c \in J_{\max} \mid c > a_{\max}\}  \neq \emptyset $. 
Let 
$\{ c_1, \ldots, c_t \} = \{ c \in J_{\max} \mid c > a_{\max}\} $. 

For each $1 \le j \le t$, $c_j \not\in I_{\max}$ since $c_j > a_{\max}$. 
Thus we have $c_j \not\in I$ for all $I \in \mathcal{I} \cup \mathcal{I}'$ and $a_{\max} \not\in I'$ for all $I' \in \mathcal{I}'$. 
Hence we have
\[
- \sum_{\substack{J \in \mathcal{J} \\ c_j \in J}} q_{J} =   - \sum_{\substack{J' \in \mathcal{J}' \\ c_j \in J'}} q'_{J'}\ , \hspace{5mm} \sum_{\substack{I \in \mathcal{I} \\ a_{\max} \in I}}p_{I} - \sum_{\substack{J \in \mathcal{J} \\ a_{\max} \in J}} q_{J} = \ - \sum_{\substack{J' \in \mathcal{J}' \\ a_{\max} \in J'}} q'_{J'} 
\]
by equation (1). 
Note that if $c_j \in J \in {\mathcal J}(P)$, then $a_{\max} \in J$.
Thus we have
%If the converse does also hold, then 
\[
\sum_{\substack{I \in \mathcal{I} \\ a_{\max} \in I}} p_{I}\ - \sum_{\substack{J \in \mathcal{J} \\ a_{\max} \in J, c_j \not\in J}} q_{J} \ \ = \ \ 
-\sum_{\substack{J' \in \mathcal{J}' \\ a_{\max} \in J', c_j \not\in J'}} q'_{J'} \le 0. 
\]
%this is a contradiction. 
Since
$$
\sum_{\substack{I \in \mathcal{I} \\ a_{\max} \in I}} p_{I}
\ge 
p_{I_{\max}}
>0,
$$
there exists $J_j \in \mathcal{J}$ such that $a_{\max} \in J_j$ and $c_j \not\in J_j$
 for each $1 \le j \le t$. 

We may assume that $J_1 \subset \cdots \subset J_t$ 
$(\subset J_{\max})$
%Now, let $J := \min\{J_j \mid j = 1, \ldots, t\}$. 
%Then $a_{\max}$ is a maximal element of $J$ 
since $\mathcal{J}$ is a totally ordered set. 
Then $J_1 \cap \{c_1,\ldots, c_t\} = \emptyset$.
Thus $a_{\max}$ is a maximal element of 
both $I_{\max}$ and $J_1$, 
and hence $x_{I_{\max}} y_{J_1}$ belongs to ${\rm in} ({\mathcal G})$.
This is a contradiction. 
Therefore $\mathcal{G}$ is a Gr\"{o}bner basis of $I_{A_{P}^{\pm}}$.  
\end{proof}

In Theorem \ref{Grobner}, the initial ideal is squarefree and quadratic. 
By Lemma \ref{Gorenstein}, we have the following:

%\begin{Corollary}
%Let $P$ be a poset.
%Then $K[A_{P}^{\pm}]$ is normal and Koszul. 
%\end{Corollary}

\begin{Corollary}
\label{NGF}
Let $P$ be a poset.
Then ${\mathcal Q}^{\rm (sym)}_{A_P}$ is a normal Gorenstein Fano polytope.
\end{Corollary}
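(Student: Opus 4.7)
The plan is to derive Corollary \ref{NGF} as an immediate consequence of Theorem \ref{Grobner} via Lemma \ref{Gorenstein}. The intended application takes $\mathcal{P} = \mathcal{Q}^{\rm (sym)}_{A_P}$ together with the configuration matrix $A = A_P^{\pm}$, whose toric ideal is controlled by Theorem \ref{Grobner}.

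Before invoking Lemma \ref{Gorenstein} I would verify its hypotheses. The polytope $\mathcal{Q}^{\rm (sym)}_{A_P}$ is integral, since it is the convex hull of $\{0,1\}$-vectors and their negatives, and it contains the origin in its interior by central symmetry together with the fact that no column of $A_P$ is zero (as already noted in the introduction). For the lattice condition, $\ZZ A_P = \ZZ^d$ was recorded in the introduction; combined with the zero column of $A_P^{\pm}$, which contributes $[0, \ldots, 0, 1]^t$, this immediately gives $\ZZ A_P^{\pm} = \ZZ^{d+1}$.

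The main point to verify is that the columns of $A_P^{\pm}$ (with the final row of $1$'s dropped) exhaust $\mathcal{Q}^{\rm (sym)}_{A_P} \cap \ZZ^d$. I would obtain this as a byproduct of the Gr\"obner basis: Theorem \ref{Grobner} provides a squarefree initial ideal of $I_{A_P^{\pm}}$ under a reverse lexicographic order whose smallest variable $z$ corresponds to the zero column, and (as in the proof of Lemma \ref{Gorenstein}) this yields a regular unimodular triangulation of $\mathcal{Q}^{\rm (sym)}_{A_P}$ whose vertex set consists of those columns and each of whose maximal simplices contains the origin. Since any lattice point of a polytope with a unimodular triangulation must in fact be a vertex of the triangulation, the listed columns exhaust $\mathcal{Q}^{\rm (sym)}_{A_P} \cap \ZZ^d$, so the matrix described in the statement of Lemma \ref{Gorenstein} is indeed $A_P^{\pm}$.

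With these checks in place, Lemma \ref{Gorenstein} applies and declares $\mathcal{Q}^{\rm (sym)}_{A_P}$ to be a normal Gorenstein Fano polytope. I expect the only mildly subtle step to be the lattice-point identification in the previous paragraph; everything else is either a direct verification or a quotation of a fact already available in the excerpt.
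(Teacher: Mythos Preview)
Your approach matches the paper's exactly: the corollary is derived in one line from Theorem~\ref{Grobner} via Lemma~\ref{Gorenstein}. Your extra care in verifying that the columns of $A_P^{\pm}$ exhaust $\mathcal{Q}^{\rm (sym)}_{A_P}\cap\ZZ^d$ goes beyond what the paper writes down, but as you observe this is already implicit in the unimodular-triangulation argument inside the proof of Lemma~\ref{Gorenstein}.
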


Note that the matrix $A_P$ of a poset $P$ is not necessarily unimodular.
A characterization of the unimodularity of $A_P$ 
appeared in \cite[Example 3.6]{OHeHi} without proof.
Here, we recall the characterization with a proof.
%give a classification of unimodular distributive lattice. 
%
Let $\mathbb{N}^2$ be the distributive lattice consisting of all pairs $(i, j)$ of nonnegative integers 
with the partial order $(i, j) \le (k, \ell)$ if and only if $i \le k$ and $j \le \ell$.  
A distributive lattice $D$ is said to be {\em planar} (see \cite[p. 436]{Lattice}) if $D$ is a finite sublattice of $\mathbb{N}^2$ with $(0, 0) \in D$ 
which satisfies the following: for any $(i, j),\ (k, \ell) \in D$ with $(i, j) < (k, \ell)$, there exists a chain (totally ordered subset) of $D$ 
of the form $(i, j) = (i_0, j_0) < (i_1, j_1) < \cdots < (i_s, j_s) = (k, \ell)$ such that $i_{k + 1} + j_{k + 1} = i_k + j_k + 1$ for all $k$. 
%As mentioned before, 
%$A_P$
% is not always unimodular. 

\begin{Proposition}[\cite{OHeHi}]
%[{see \cite[Example 3.6]{OHeHi}}]
\label{unimodular}
Let ${\mathcal J}(P)$ be the distributive lattice associated with a poset $P$. 
Then $A_P$ is unimodular if and only if ${\mathcal J}(P)$ is planar. 
\end{Proposition}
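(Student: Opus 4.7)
The plan is to reduce the statement to a characterization of planarity of $\mathcal{J}(P)$ in terms of $P$ itself, and then to argue each implication by direct linear algebra. First I would establish the bridging equivalence: \emph{$\mathcal{J}(P)$ is planar if and only if $P$ has width at most $2$}, equivalently (by Dilworth's theorem) $P$ is a disjoint union $P = C_1 \sqcup C_2$ of two chains. If $P = C_1 \sqcup C_2$, the map $\phi : \mathcal{J}(P) \to \NN^2$ defined by $\phi(I) = (|I \cap C_1|,\,|I \cap C_2|)$ is a lattice embedding onto a rank-preserving sublattice of $\NN^2$ containing $(0,0)$. Conversely, if $\{a,b,c\}$ is a $3$-antichain of $P$, then $J := (\langle a\rangle \cup \langle b\rangle \cup \langle c\rangle) \setminus \{a,b,c\}$ is a poset ideal, and $J\cup\{a\}$, $J\cup\{b\}$, $J\cup\{c\}$ are three distinct upper covers of $J$ in $\mathcal{J}(P)$, which cannot occur in any sublattice of $\NN^2$.

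For the sufficiency direction, assume $P = C_1 \sqcup C_2$ with $C_1 = \{a_1 < \cdots < a_m\}$ and $C_2 = \{b_1 < \cdots < b_n\}$, and order the rows of $A_P$ in this list. Each column $\rho(I)$ then consists of an initial segment of $1$'s of length $|I \cap C_1|$ in the first block followed by an initial segment of length $|I \cap C_2|$ in the second block. Replacing each row (other than the last of each block) by its difference with the next row---a unimodular row operation---converts every column into a $0/1$ vector having at most one $1$ per block. The transformed matrix is the vertex-edge incidence matrix of a bipartite graph (with pendant half-edges permitted for columns lying entirely in one block), hence totally unimodular. Since unimodular row operations preserve maximal minors, every nonzero maximal minor of $A_P$ is $\pm 1$.

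The necessity direction is the technical core and the step I expect to be the main obstacle. Given a $3$-antichain $\{a,b,c\}$, partition $P = J \sqcup P'$ with $J$ as above; in $P'$ the elements $a,b,c$ are minimal. The $d$ columns of $A_P$ I would select are the principal ideals $\langle y\rangle$ for $y \in J$, the three ``upper doubletons'' $J\cup\{a,b\}$, $J\cup\{a,c\}$, $J\cup\{b,c\}$, and $J\cup\langle x\rangle_{P'}$ for $x \in P' \setminus \{a,b,c\}$. With rows ordered as $J$ (linear extension), then $a,b,c$, then $P'\setminus\{a,b,c\}$ (linear extension), the resulting $d \times d$ submatrix becomes block upper triangular at two nested levels, with diagonal blocks equal to an upper-triangular unit-diagonal matrix, the $3\times 3$ matrix $\left(\begin{smallmatrix}1&1&0\\1&0&1\\0&1&1\end{smallmatrix}\right)$ of determinant $-2$, and another upper-triangular unit-diagonal matrix; the total determinant is $\pm 2$, contradicting unimodularity.

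The delicate point in the necessity step is verifying the two zero-blocks that make this nested decomposition work. The outer zero-block holds because $\langle y\rangle \subseteq J$ for $y \in J$, so the columns $\langle y\rangle$ vanish on all rows outside $J$. The inner zero-block---the vanishing of the three doubletons on rows $P' \setminus \{a,b,c\}$---requires the precise definition of $J$: by construction no element of $P'\setminus\{a,b,c\}$ lies at or below any of $a,b,c$, so $(J\cup\{a,b\}) \cap (P'\setminus\{a,b,c\}) = \emptyset$ and likewise for the other two doubletons. This is what isolates the non-unimodular $3 \times 3$ determinant and allows the same construction to work regardless of where $\{a,b,c\}$ sits inside $P$.
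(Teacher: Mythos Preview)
Your proof is correct, and both directions take a more hands-on route than the paper's.

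For the forward direction, the paper invokes an isomorphism $K[A_P]\cong K[A]$ with $A$ the vertex--edge incidence matrix of a bipartite graph (citing \cite{KBG}) and then quotes the unimodularity of such matrices from \cite{Sch}. Your explicit row-differencing accomplishes the same reduction directly at the level of matrices, which is more self-contained and makes transparent why the bipartite structure appears.

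For the reverse direction, the paper also starts from a $3$-antichain $\{a,b,c\}$ and looks at the interval $[I\setminus\{a,b,c\},\,I]$ in $\mathcal J(P)$, but then appeals to the machinery of \emph{combinatorial pure subrings} from \cite{OHeHi}: the sublattice is $\mathcal J(P')$ for the $3$-element antichain $P'$, the $3\times 7$ matrix $A_{P'}$ is visibly non-unimodular, and the combinatorial pure subring property transfers non-unimodularity back to $A_P$. Your argument instead builds an explicit $d\times d$ minor of $A_P$ with determinant $\pm 2$, isolating the same $3\times 3$ block $\left(\begin{smallmatrix}1&1&0\\1&0&1\\0&1&1\end{smallmatrix}\right)$ via a block-triangular decomposition. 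This is more elementary (no need for the pure-subring notion) at the cost of the careful bookkeeping you flag; conversely, the paper's route is shorter once that machinery is granted and extends more readily to other subconfiguration arguments.

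One small presentational remark: the paper asserts without proof that ``$\mathcal J(P)$ not planar'' implies the existence of a $3$-antichain, whereas you actually supply the short argument for the bridging equivalence $\{\mathcal J(P)\text{ planar}\}\Leftrightarrow\{\mathrm{width}(P)\le 2\}$. That addition is worthwhile.
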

\begin{proof}
Assume that ${\mathcal J}(P)$ is planar. 
Then $K[A_P]$ is isomorphic to $K[A]$ where $A$ is 
the vertex-edge incidence matrix of a bipartite graph
(see \cite{KBG}). 
Since $A$ is unimodular \cite{Sch}, $A_P$ is also unimodular. 

Conversely, assume that ${\mathcal J}(P)$ is not planar. 
Then there exist $a, b, c \in P$ such that $a \not\sim b$, $a \not\sim c$ and $b \not\sim c$. 
Let $I = \langle a \rangle \cup \langle b \rangle \cup \langle c \rangle$. 
Then the sublattice
$
\left\{
J \in {\mathcal J}(P)
\ \left| \ 
%\begin{array}{l}
%i \in J \ \ \mathrm{if}\ \ i \in I \setminus \{a, b, c\}, \\
%i \not\in J \ \ \mathrm{if}\ \  i \not\in I
 I \setminus \{a, b, c\} \subset J \subset I
%\end{array}
\right.
\right\}
$
 of ${\mathcal J}(P)$
is isomorphic to ${\mathcal J} (P')$,
where $P'=\{a,b,c\}$ is an antichain.
It is easy to see that 
$$A_{P'}=
\begin{bmatrix}
1 & 0 & 0 & 1 & 1 & 0 & 1\\
0 & 1 & 0 & 1 & 0 & 1 & 1\\
0 & 0 & 1 & 0 & 1 & 1 & 1
\end{bmatrix}
$$ is not unimodular.
Since $K[A_{P'}]$ is a combinatorial pure subring (see \cite{OHeHi}) of $K[A_P]$, $A_P$ is not unimodular. 
 \end{proof}

\section{The $\delta$-vector of ${\mathcal Q}^{\rm (sym)}_{A_P}$ of an antichain poset $P$}

In this section, we show that
the $\delta$-vector of ${\mathcal Q}^{\rm (sym)}_{A_P}$
of an antichain poset $P$ is
the Eulerian number (see \cite{sbook}).
%
%by using Theorem \ref{Grobner}.   

%
First, we review Ehrhart theory on integral convex polytopes.
Let ${\mathcal P} \subset \RR^d$  be an integral convex polytope of dimension $d$.
Given integers $t=1,2,\ldots$, we set
$i({\mathcal P},t) = \# (t {\mathcal P} \cap \ZZ^d)$.
Then $i({\mathcal P},t) $ is a polynomial in $t$
of degree $d$ and is called the {\em Ehrhart polynomial}
of ${\mathcal P}$.
Its generating function satisfies
$$
1 + \sum_{t=1}^{\infty} i({\mathcal P},t) 
\lambda^t
=
\frac{\delta_{\mathcal P}(\lambda)}{(1-\lambda)^{d+1}}
$$
where $\delta_{\mathcal P}(\lambda) = \sum_{i=0}^d \delta_i \lambda^i$ is a polynomial 
in $\lambda$ of degree $\le d$.
The vector $(\delta_0,\ldots, \delta_d)$ is called the {\em $\delta$-vector} of ${\mathcal P}$.
It is known that a Fano polytope
${\mathcal P} \subset \RR^d$ is Gorenstein if and only if 
%$\delta_{\mathcal P} (\lambda)$ is palindromic, i.e., 
$\delta_i = \delta_{d-i}$ for all $0 \leq i \leq d$.
See \cite{hibibook}.

Let $P$ be an antichain poset on $[d]$.
One of the referees pointed out that 
${\mathcal Q}^{\rm (sym)}_{A_P}$
is the Minkowski sum $C_d + L$ where
$C_d$ is the unit $d$-cube in $\RR^d$
and $L$ is the closed line segment
whose end points are the origin and
the vector ${\bf v} = [-1, \ldots, -1]^t \in \RR^d$.
In order to see this fact,
note that
${\mathcal Q}^{\rm (sym)}_{A_P}$
is the convex hull of $C_d \cup (-C_d)$
and that $-C_d$ is the Minkowski sum $C_d + {\bf v}$.
Hence ${\mathcal Q}^{\rm (sym)}_{A_P}$ is a {\it zonotope}, that is, a Minkowski sum of
closed line segments:
$$
{\mathcal Q}^{\rm (sym)}_{A_P}
=
\{
r_1 {\bf e}_1 + \cdots + r_d {\bf e}_d + r_{d+1} {\bf v}
\ | \ 
0 \leq r_i \leq 1\  (i = 1,2, \ldots, d+1)
\}
.$$
By using \cite[Theorem 2.2]{StanleyZono},
we have the following:

\begin{Proposition}
\label{antichain}
Let $P$ be an antichain poset on $[d]$.
% with $\#P = d$. 
Then we have
%the Hilbert polynomial of $K[A_{P}^{\pm}]$ is 
\begin{eqnarray*}
i( {\mathcal Q}^{\rm (sym)}_{A_P} ,t)
&=&
(t + 1)^{d + 1} - t^{d + 1},\\
1+
\sum_{t=1}^\infty
i( {\mathcal Q}^{\rm (sym)}_{A_P} ,t) \lambda^t
&=&
\frac{\sum_{i = 0}^{d} A(d + 1, i)\lambda^i}{(1 - \lambda)^{d + 1}}, 
\end{eqnarray*}
where $A(d + 1, i)$ is the Eulerian number. 
\end{Proposition}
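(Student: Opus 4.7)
The plan is to exploit the zonotope structure of $\mathcal{Q}^{\rm (sym)}_{A_P}$ recorded just before the proposition. For an antichain $P$ on $[d]$ the polytope is the Minkowski sum of the segments $[\mathbf{0},\mathbf{e}_1], \ldots, [\mathbf{0},\mathbf{e}_d], [\mathbf{0},\mathbf{v}]$ with $\mathbf{v} = [-1,\ldots,-1]^t$. Stanley's zonotope formula \cite[Theorem 2.2]{StanleyZono} expresses the Ehrhart polynomial of an integral zonotope $Z$ as
\[
i(Z,t) \;=\; \sum_{S} m(S)\, t^{|S|},
\]
where $S$ ranges over the linearly independent subsets of the generators and $m(S)$ is the greatest common divisor of the $|S| \times |S|$ minors of the matrix whose columns are the elements of $S$.

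First I would enumerate the independent subsets of $\{\mathbf{e}_1,\ldots,\mathbf{e}_d,\mathbf{v}\}$. Every subset of $\{\mathbf{e}_1,\ldots,\mathbf{e}_d\}$ is independent, while a subset containing $\mathbf{v}$ is independent if and only if it omits some $\mathbf{e}_j$, because $\mathbf{v} = -(\mathbf{e}_1 + \cdots + \mathbf{e}_d)$. Hence the number of independent subsets of size $k$ (with $0 \leq k \leq d$) equals
\[
\binom{d}{k} + \binom{d}{k-1} = \binom{d+1}{k}
\]
by Pascal's identity.

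Next I would verify that $m(S)=1$ uniformly. For $S \subseteq \{\mathbf{e}_1,\ldots,\mathbf{e}_d\}$ this is immediate. For $S=\{\mathbf{v},\mathbf{e}_{i_1},\ldots,\mathbf{e}_{i_{k-1}}\}$, selecting any row $j \notin \{i_1,\ldots,i_{k-1}\}$ together with the rows $i_1,\ldots,i_{k-1}$ yields a $k \times k$ submatrix of determinant $\pm 1$ (expand along the column of $\mathbf{v}$), so the gcd of the maximal minors is $1$. Combining with Stanley's formula,
\[
i(\mathcal{Q}^{\rm (sym)}_{A_P}, t) = \sum_{k=0}^{d}\binom{d+1}{k} t^k = (t+1)^{d+1} - t^{d+1}.
\]

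For the generating function I would invoke the classical identity $\sum_{t \geq 0}(t+1)^{n}\lambda^t = A_n(\lambda)/(1-\lambda)^{n+1}$, where $A_n(\lambda) = \sum_{i=0}^{n-1} A(n,i)\lambda^i$ is the Eulerian polynomial. Applied with $n = d+1$, together with $\sum_{t \geq 0} t^{d+1}\lambda^t = \lambda \sum_{t \geq 0}(t+1)^{d+1}\lambda^t$, the subtraction introduces a factor $(1-\lambda)$ that cancels one power in the denominator, yielding the stated formula $\sum_{i=0}^d A(d+1,i)\lambda^i / (1-\lambda)^{d+1}$; the constant term $i(\mathcal{Q}^{\rm (sym)}_{A_P},0)=1$ accounts for the leading $1$ in the series. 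The only genuinely delicate point is the uniform verification $m(S)=1$, which is what causes the independent-subset count to collapse so cleanly to $\binom{d+1}{k}$; everything else is bookkeeping on Pascal's identity and a standard Eulerian-number generating function.
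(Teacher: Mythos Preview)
Your proof is correct and follows essentially the same approach as the paper: both invoke Stanley's zonotope formula \cite[Theorem 2.2]{StanleyZono} for the generators $\{\mathbf{e}_1,\ldots,\mathbf{e}_d,\mathbf{v}\}$, observe that every proper subset is independent with unimodular minors (so $m(S)=1$), and then apply the standard Eulerian-polynomial identity for the generating function. Your write-up is slightly more explicit in counting independent subsets via Pascal's identity, but the argument is the same.
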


\begin{proof}
Let $B = \{{\bf e}_1, \ldots, {\bf e}_d, {\bf v}\}$.
Then $B$ is not linearly independent and
any proper subset of $B$ is linearly independent.
It is easy to see that any nonzero minor of
the matrix 
$[{\bf e}_1, \ldots, {\bf e}_d, {\bf v}] \in \ZZ^{d \times (d+1)}$
is $\pm 1$.
Hence by \cite[Theorem 2.2]{StanleyZono},
we have 
$$i( {\mathcal Q}^{\rm (sym)}_{A_P} ,t)
=
\sum_{X \subsetneq B}
t^{\sharp X}
=(t + 1)^{d + 1} - t^{d + 1}
.$$
By a well-known identity
\[
\sum_{t = 1}^{\infty} t^d \lambda^t = \frac{\sum_{i = 0}^{d} A(d, i) \lambda^{i + 1}}{(1 - \lambda)^{d + 1}}
\]
for the Eulerian number,
it is easy to show 
$$1+
\sum_{t=1}^\infty
((t + 1)^{d + 1} - t^{d + 1})\lambda^t
=
\frac{\sum_{i = 0}^{d} A(d + 1, i)\lambda^i}{(1 - \lambda)^{d + 1}}.
$$
\end{proof}

\begin{Remark}
Let $P$ be an antichain poset on $[d]$.
% with $\# P = d$. 
It is known that 
$$
1+
\sum_{t=1}^\infty
i({\mathcal O}(P),t) \lambda^t
=
\frac{\sum_{i = 0}^{d} A(d, i) \lambda^i}{(1 - \lambda)^{d + 1}}
.$$ 
\end{Remark}

\begin{Example}
Let $P$ be the poset as appeared in Example \ref{example}. 
Then the $\delta$-vector of $\mathcal{Q}_{A_P}^{(\mathrm{sym})}$ is $(1, 15, 54, 54, 15, 1)$. 
Note that $\mathcal{O}(P)$ is not Gorenstein since $P$ is not pure, and its $\delta$-vector is $(1, 5, 3)$. 
\end{Example}

\bigskip

\noindent
{\bf Acknowledgment.}
The authors are grateful to anonymous referees for
their careful reading, useful suggestions, and helpful comments.

\bigskip

\end{document}